\newcommand{\N}{\mathbb{N}}
\newcommand{\R}{\mathbb{R}}
\newcommand{\eps}{\varepsilon}
\theoremstyle{plain}
\newtheorem{theorem}{Theorem}[section]
\newtheorem{proposition}[theorem]{Proposition}
\theoremstyle{remark}
\newtheorem{remark}{Remark}[section]
\begin{document}

\title{Superconsistency of Tests in High Dimensions\footnote{\emph{MSC 2020 subject classifications}: 62F05 (Primary); 62F03, 62C20 (Secondary). \newline \emph{Keywords:} High-dimensional testing problems, big data, power, superconsistency, concentration of measure, power enhancement. \newline We are grateful to the comments of the Editor and four referees which helped to improve the previous version of the manuscript.}}

\date{First version: May 2021 \\
This version: January 2022}

\author{
\begin{tabular}{c}
Anders Bredahl Kock \\ 
\small	University of Oxford \\
\small	CREATES, Aarhus University\\
\small	10 Manor Rd, Oxford OX1 3UQ
\\
\small	{\small	\href{mailto:anders.kock@economics.ox.ac.uk}{anders.kock@economics.ox.ac.uk}} 
\end{tabular}
\and
\begin{tabular}{c}
David Preinerstorfer \\ 
{\small	SEW and SEPS} \\ 
{\small	 University of St.~Gallen} \\
{\small	Varnb\"uelstrasse 14, St.~Gallen} \\ 
{\small	 \href{mailto:david.preinerstorfer@unisg.ch}{david.preinerstorfer@unisg.ch}}
\end{tabular}
}

\maketitle	

\begin{abstract}
To assess whether there is some signal in a big database, aggregate tests for the global null hypothesis of no effect are routinely applied in practice before more specialized analysis is carried out. Although a plethora of aggregate tests is available, each test has its strengths but also its blind spots. In a Gaussian sequence model, we study whether it is possible to obtain a test with substantially better consistency properties than the likelihood ratio (i.e., Euclidean norm based) test. We establish an impossibility result, showing that in the high-dimensional framework we consider, the set of alternatives for which a test may improve upon the likelihood ratio test --- that is, its superconsistency points --- is always asymptotically negligible in a relative volume sense. \end{abstract}

\newpage

\section{Introduction}

A major challenge in the current ``big-data era'' is to extract signals from huge databases. Often, an applied researcher proceeds in a two-step fashion: First, in order to decide whether there is any signal in the data at all, one performs an aggregate test of the global null hypothesis of no signal. This global null hypothesis is typically formulated as the high-dimensional target parameter being the zero vector. Second, if the global null hypothesis was rejected by the test, further analysis is undertaken to uncover the precise nature of the signal. Much research has been directed to studying properties of such a sequential rejection principle, cf.~\cite{romano2005exact}, \cite{yekutieli2008hierarchical}, \cite{rosenbaum2008testing}, \cite{meinshausen2008hierarchical}, \cite{goeman2010sequential}, \cite{heller2}, \cite{bogomolov2020hypotheses} and references therein. 

Using a powerful test for the global null hypothesis in the first step of such a hierarchical multi-step procedure is of course crucial, and the development of tests for this hypothesis has therefore attracted much research in its own right. A typical choice, employed in, e.g., \cite{heller}, is to use a test based on the Euclidean norm of the estimator. This also leads to the likelihood ratio (LR) test in the Gaussian sequence model they considered, which is also the framework in the present article. Although the LR test is a natural choice, one may ask: \emph{Do tests for the global null exist that are consistent against substantially more alternatives than the LR test?} This question is practically relevant, because one can choose from a large menu of well-established tests, yet precisely which one to use is not obvious: For example, one could use tests based on other norms than the Euclidean one, a natural class of tests being based on~$p$-norms, cf.~the classic monograph of~\cite{ingster}. One could also use a test based on combining different~$p$-norms as suggested by the power enhancement principle of \cite{fan2015} and in~\cite{kp2}. The possibility of increasing power by combining tests has recently been applied in many types of high-dimensional testing problems, cf.~\cite{xu2016adaptive}, \cite{yang2017weighted}, \cite{yu2020fisher}, \cite{he2021asymptotically}, \cite{yu2021power} [testing high-dimensional means and covariance matrices]; \cite{zhang2021adaptive} [change point detection]; \cite{jammalamadaka2020sobolev} [tests for uniformity on the sphere]; \cite{feng2020max} [tests for cross-sectional independence
in high-dimensional panel data models]. Another test that has gained popularity in recent years is the Higher Criticism. This test dates back to~\cite{tukey1976t13} and its strong power properties against deviations from the global null were first exhibited by~\cite{donoho} and have led to much subsequent research, cf.~\cite{donoho2009feature}, \cite{hall2010innovated}, \cite{ tony2011optimal}, \cite{arias2011global}, \cite{barnett2014analytical}, \cite{li2015higher}, \cite{arias2019detection} and \cite{ porter2020beyond}. Alternatively, one could use tests based on combining~p-values for coordinate-wise zero restrictions. Important early work includes~\cite{fisher1934statistical}, \cite{tippett1931methods}, \cite{pearson1933method}, \cite{stouffer1949american} and \cite{simes1986improved}. For a review of the classic literature see \cite{cousins2007annotated}, more recent contributions are \cite{owen09},  \cite{duan2020interactive} and~\cite{vovk2020combining, vovk2020values}. It is crucial to highlight here that many of the above mentioned tests are consistent against strictly more alternatives than the LR test, i.e., they dominate the LR test in terms of their consistency properties; indeed, this is the main motivation of the power enhancement principle. Hence, the question of interest in the present article is not whether one can do better than the LR test at all, but whether one can do \emph{substantially} better.

We consider the question raised in the previous paragraph from a high-dimensional perspective. In the Gaussian sequence model, we investigate
whether aggregate tests can be obtained that are consistent against substantially more alternatives than the likelihood
ratio test. We show that relative to a uniform prior on the parameter space this is impossible: essentially, we prove that for any given test the set of alternatives against which it is consistent, but the LR test is not, has vanishing relative Lebesgue measure. Hence, no test for the global null hypothesis can substantially improve on the LR test. From a technical perspective, our proofs are based on results by~\cite{ss} concerning the asymptotic volume of intersections of~$p$-norm balls and on the concentration phenomenon for Lipschitz continuous functions on spheres as exposited in~\cite{braz} or \cite{vershynin_2018}.

Our finding is reminiscent of \cite{lecam1953}, who showed (in finite-dimensional settings) that the set of possible superefficiency points of an estimator relative to the maximum likelihood estimator cannot be larger than a Lebesgue null set; cf.~also \cite{vanderVaart1997}. Note that our result does not imply that one should always use the LR test and not think carefully about the choice of test in high-dimensional testing problems. If, for example, one is interested in particular types of deviations from the null, e.g., sparse ones, there may be good reasons to use a test based on the supremum norm or the Higher Criticism. Furthermore, albeit very natural, the magnitude of the consistency set is merely one of many properties that can be used to compare tests. For example, tests are also frequently compared in terms of, e.g., their minimax detection properties or their local power against deviations from the null of a specific type. Nevertheless, in analogy to \cite{lecam1953}, regardless of how clever an alternative test is designed, the amount of alternatives against which one achieves an improvement as compared to the LR test cannot be substantial in terms of relative volume. This also supports basing a combination procedure, such as the power enhancement principle by~\cite{fan2015}, on the Euclidean norm.

\section{Framework and terminology}

We consider the Gaussian sequence model
\begin{equation}\label{eqn:model}
y_{i,d} = \theta_{i,d} + \varepsilon_i, \quad i = 1, \hdots, d,
\end{equation}
where~$y_{1,d}, \hdots, y_{d,d}$ are the observations, the parameters~$\theta_{i,d} \in \R$ are unknown, and where the unobserved terms~$\varepsilon_i$ are independent and standard normal. Writing~$\bm{y}_d = (y_{1,d}, \hdots y_{d,d})'$, $\bm{\varepsilon}_d = (\varepsilon_1, \hdots, \varepsilon_d)'$, and~$\bm{\theta}_d = (\theta_{1,d}, \hdots, \theta_{d,d})' \in \R^d$, one can equivalently state the model in~\eqref{eqn:model} as~$\bm{y}_d=\bm{\theta}_d+\bm{\eps}_d$. We observe a single realization of a~$d$-dimensional Gaussian vector~$\bm{y}_d$ with mean~$\bm{\theta}_d$ and identity covariance matrix for each $d\in\N$. In this sense the ``sample size'' is one for each~$d$ (but cf.~Remark~\ref{rem:concGS} below). The asymptotic analysis in the Gaussian sequence model then relies on~$d\to\infty$. This is a high-dimensional regime in the sense that the number of parameters,~$d$, tends to infinity. In the model~\eqref{eqn:model}, we are interested in the testing problem
\begin{equation}\label{eqn:tp}
H_{0, d}: \bm{\theta}_d = \bm{0}_d \quad \text{ against } \quad H_{1, d}: \bm{\theta}_d \in \R^d \setminus \{\bm{0}_d\},
\end{equation}
where~$\bm{0}_d$ denotes the origin in~$\R^d$. The null hypothesis~$H_{0, d}$ is typically referred to as the ``global null'' of no effect. 

\begin{remark}\label{rem:concGS}
Although the Gaussian sequence model is an idealization, many fundamental issues of high dimensionality show up already here and insights obtained within this model carry over, at least on a conceptual level, to many other settings. It is therefore widely recognized as an important prototypical framework in high-dimensional statistics, see, for example,~\cite{ingster}, \cite{carpentier2019adaptive}, \cite{johnstone} or \cite{castillo2020spike}. To make this more precise, consider a situation where an estimator~$\hat{\bm{\beta}}_d$ for a target parameter~$\bm{\beta}_d \in \R^d$ is available the distribution of which satisfies
\begin{equation}\label{eq:approxdist}
\hat{\bm{\beta}}_d \approx \mathbb{N}(\bm{\beta}_d, \bm{\Omega}_d).
\end{equation}
Suppose further that an invertible estimator~$\hat{\bm{\Omega}}_d \approx \bm{\Omega}_d$ is at one's disposal, such that
\begin{equation*}
\hat{\bm{\Omega}}_d^{-1/2} \hat{\bm{\beta}}_d \approx \mathbb{N}(\bm{\Omega}_d^{-1/2}\bm{\beta}_d, \bm{I}_d).
\end{equation*}
Then, testing~$\bm{\beta}_d = \bm{0}_d$ on the basis of~$\hat{\bm{\beta}}_d$ and~$\hat{\bm{\Omega}}_d$ is approximated by testing~$\bm{\theta}_d :=  \bm{\Omega}_d^{-1/2}\bm{\beta}_d = \bm{0}_d$ in a Gaussian sequence model. Precise sets of conditions under which the above approximation statements hold depend on the interplay of the dimension of the target parameter and sample size as well as particularities of the specific setup under consideration. This has been a topic of intense research interest in recent years and suitable sets of sufficient conditions can be found in, e.g., \cite{bentkus2003dependence}, \cite{chernozhukov2017central} and \cite{GF}. Working directly with a Gaussian sequence model allows us to bypass such aspects.
\end{remark}

For a given~$d \in \N$, a (possibly randomized) test~$\varphi_d$, say, for~\eqref{eqn:tp} is a (measurable) function from the sample space~$\R^d$ to the closed unit interval. In the asymptotic framework we consider, we are interested in properties of sequences of tests~$\{\varphi_d\}$, where $\varphi_d$ is a test for~\eqref{eqn:tp} for every~$d \in \N$. To lighten the notation, we shall write~$\varphi_d$ instead of~$\{\varphi_d\}$ whenever there is no risk of confusion. We are particularly interested in the consistency properties of sequences of tests. As usual, we say that a sequence of tests~$\varphi_d$ is consistent against the \emph{array} of parameters~$\bm{\vartheta} = \{\bm{\theta}_d : d \in \N\}$, where~$\bm{\theta}_d \in \R^d$ for every~$d \in \N$, if and only if (as~$d \to \infty$)
\begin{equation*}
\mathbb{E}\left(\varphi_d(\bm{\theta}_d + \bm{\varepsilon}_d)\right) \to 1.
\end{equation*}
To every sequence of tests~$\varphi_d$ we associate its \emph{consistency set}~$\mathscr{C}(\varphi_d)$, say. The consistency set~$\mathscr{C}(\varphi_d)$ is the set of all arrays of parameters~$\bm{\vartheta}$ the sequence of tests~$\varphi_d$ is consistent against. By definition~$$\mathscr{C}(\varphi_d) \subseteq \bigtimes_{d = 1}^{\infty} \R^d =: \bm{\Theta},$$ the latter denoting the set of all possible arrays of parameters. 

Recall that a sequence of tests~$\varphi_d$ is said to have \emph{asymptotic size}~$\alpha \in [0, 1]$ if
\begin{equation*}
\mathbb{E}\left(\varphi_d(\bm{\varepsilon}_d)\right) \to \alpha.
\end{equation*}
In this article, following the Neyman-Pearson paradigm, we focus on the case where~$\alpha \in (0, 1)$, which we shall implicitly assume in the discussions throughout unless mentioned otherwise.

It is well-known that the LR test for~\eqref{eqn:tp} rejects if the Euclidean norm~$\|\cdot\|_2$ of the observation vector~$\bm{y}_d$ exceeds a critical value~$\kappa_{d,2}$ chosen to satisfy the given size constraints. That is, the LR test is given by~$\mathds{1}\{\|\cdot\|_2 \geq \kappa_{d,2}\}$.  For notational simplicity, we abbreviate the sequence of tests~$\{\mathds{1}\{\|\cdot\|_2 \geq \kappa_{d,2}\} \}$ by~$\{2, \kappa_{d,2}\}$ and thus write~$\mathscr{C}(\{2, \kappa_{d,2}\})$ for its consistency set. The following result is contained in \cite{ingster},~cf.~also Theorem~3.1 in \cite{kp2} for extensions. 

\begin{theorem}\label{thm:ing}
Let~$\kappa_{d,2}$ be a sequence of critical values such that the asymptotic size of~$\{2, \kappa_{d,2}\}$ is~$\alpha \in (0, 1)$. Then
\begin{equation}\label{eqn:2normcons}
\bm{\vartheta} \in \mathscr{C}(\{2, \kappa_{d,2}\}) \quad \Leftrightarrow \quad d^{-1/2} \|\bm{\theta}_d\|_2^2 \to \infty.
\end{equation}
\end{theorem}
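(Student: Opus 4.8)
The plan is to analyze the rejection event $\{\|\bm{y}_d\|_2 \geq \kappa_{d,2}\}$ under both the null and the alternative by exploiting the fact that $\|\bm{y}_d\|_2^2$ is a (possibly noncentral) chi-squared random variable. Under the null, $\|\bm{\eps}_d\|_2^2 \sim \chi^2_d$, which has mean $d$ and variance $2d$. The first step is to pin down the size constraint: since the asymptotic size is $\alpha \in (0,1)$, the critical value $\kappa_{d,2}$ must sit in the right tail of the $\chi^2_d$ distribution at level $\alpha$. By the central limit theorem applied to the sum of squares, $(\|\bm{\eps}_d\|_2^2 - d)/\sqrt{2d}$ converges to a standard normal, so $\kappa_{d,2}^2$ must satisfy $\kappa_{d,2}^2 = d + \sqrt{2d}\, z_{1-\alpha} + o(\sqrt{d})$, where $z_{1-\alpha}$ is the $(1-\alpha)$-quantile of the standard normal. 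This is the key calibration that ties the threshold to the null distribution.

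Next I would analyze the power. Under the alternative $\bm{\theta}_d$, we have $\|\bm{y}_d\|_2^2 = \|\bm{\theta}_d + \bm{\eps}_d\|_2^2 = \|\bm{\theta}_d\|_2^2 + 2\bm{\theta}_d'\bm{\eps}_d + \|\bm{\eps}_d\|_2^2$, which is noncentral chi-squared with noncentrality $\|\bm{\theta}_d\|_2^2$. Its mean is $d + \|\bm{\theta}_d\|_2^2$ and its variance is $2d + 4\|\bm{\theta}_d\|_2^2$. The test is consistent if and only if the rejection probability tends to one, i.e., if the mean of the test statistic under the alternative exceeds the threshold $\kappa_{d,2}^2 = d + O(\sqrt{d})$ by an amount that dominates the standard deviation. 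Writing the power as $\mathbb{P}(\|\bm{y}_d\|_2^2 \geq \kappa_{d,2}^2)$ and standardizing, the relevant quantity is
\begin{equation*}
\frac{\|\bm{\theta}_d\|_2^2 + d - \kappa_{d,2}^2}{\sqrt{2d + 4\|\bm{\theta}_d\|_2^2}} = \frac{\|\bm{\theta}_d\|_2^2 - \sqrt{2d}\,z_{1-\alpha} + o(\sqrt{d})}{\sqrt{2d + 4\|\bm{\theta}_d\|_2^2}}.
\end{equation*}
The goal is to show this diverges to $+\infty$ if and only if $d^{-1/2}\|\bm{\theta}_d\|_2^2 \to \infty$.

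The two directions then follow from examining this ratio. For the direction ($\Leftarrow$), assuming $d^{-1/2}\|\bm{\theta}_d\|_2^2 \to \infty$: the numerator is dominated by $\|\bm{\theta}_d\|_2^2$ while the denominator is $\sqrt{2d + 4\|\bm{\theta}_d\|_2^2}$. If $\|\bm{\theta}_d\|_2^2 = o(d)$ the denominator is $\sim\sqrt{2d}$ and the ratio behaves like $\|\bm{\theta}_d\|_2^2/\sqrt{2d} \to \infty$; if $\|\bm{\theta}_d\|_2^2$ grows at least like $d$ the ratio behaves like $\sqrt{\|\bm{\theta}_d\|_2^2}/2 \to \infty$ as well; a case analysis (or a unified bound) handles all regimes, and the divergence together with a CLT/Lyapunov argument for the noncentral chi-squared yields power tending to one. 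For the converse ($\Rightarrow$), I would prove the contrapositive: if $d^{-1/2}\|\bm{\theta}_d\|_2^2 \not\to \infty$, then along some subsequence $\|\bm{\theta}_d\|_2^2 \leq C\sqrt{d}$, so the numerator stays bounded by a constant times $\sqrt{d}$ while the denominator is at least $\sqrt{2d}$, keeping the standardized quantity bounded and hence the power bounded away from one along that subsequence, so consistency fails.

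The main obstacle I anticipate is making the asymptotic normality of the standardized noncentral chi-squared statistic rigorous and uniform enough across the different growth regimes of $\|\bm{\theta}_d\|_2^2$. Because $\|\bm{y}_d\|_2^2 = \sum_i (\theta_{i,d}+\varepsilon_i)^2$ is a sum of independent but non-identically distributed terms whose individual variances depend on $\theta_{i,d}$, establishing the CLT requires verifying a Lyapunov or Lindeberg condition, which can fail if the $\theta_{i,d}$ are very concentrated in a few coordinates. The cleaner route, and the one I would ultimately pursue, is to avoid a full distributional limit and instead use one-sided concentration inequalities (e.g., Chebyshev's inequality or sharper exponential tail bounds for noncentral chi-squared variables) to bound the power from below in the ($\Leftarrow$) direction and from above in the ($\Rightarrow$) direction; this sidesteps the delicate uniform CLT while still delivering the sharp threshold $d^{-1/2}\|\bm{\theta}_d\|_2^2 \to \infty$.
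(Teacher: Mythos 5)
The paper itself offers no proof of this theorem --- it is imported from \cite{ingster} (cf.\ Theorem~3.1 of \cite{kp2}) --- so your self-contained argument is necessarily a different route; it is the standard one and is correct in outline. The calibration $\kappa_{d,2}^2 = d + \sqrt{2d}\,z_{1-\alpha} + o(\sqrt{d})$ from the null CLT is right, and the sufficiency direction is complete as you describe: Chebyshev's inequality applied to the noncentral statistic with mean $d + \|\bm{\theta}_d\|_2^2$ and variance $2d + 4\|\bm{\theta}_d\|_2^2$ gives power tending to one whenever $d^{-1/2}\|\bm{\theta}_d\|_2^2 \to \infty$, with no distributional limit needed. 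The one soft spot is the converse. Along a subsequence with $\|\bm{\theta}_d\|_2^2 \le C\sqrt{d}$ you must show the \emph{acceptance} probability stays bounded away from zero, and when $\|\bm{\theta}_d\|_2^2 > \sqrt{2d}\,z_{1-\alpha}$ the threshold sits a bounded number of standard deviations \emph{below} the mean of the statistic; what is then required is an anti-concentration (lower) bound on a left-tail probability, which Chebyshev, Cantelli, or exponential tail inequalities cannot supply, since they only bound such probabilities from above. So your proposed fallback of replacing the CLT everywhere by one-sided concentration does not close this step. Fortunately, your worry that Lindeberg/Lyapunov might fail for $\bm{\theta}_d$ concentrated on few coordinates is unfounded exactly where the CLT is needed: along the relevant subsequence one has $\sum_{i}\mathbb{E}\abs{X_i - \mathbb{E}X_i}^3 \lesssim d + \|\bm{\theta}_d\|_3^3 \le d + \|\bm{\theta}_d\|_2^3 = O(d)$ while the variance raised to the power $3/2$ is of order $d^{3/2}$, so Lyapunov's condition holds, the standardized statistic is asymptotically standard normal, and the power is bounded away from one. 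With the CLT reinstated for the converse (and Chebyshev retained for sufficiency), the proof is complete.
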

Theorem~\ref{thm:ing} shows that the consistency set of the LR test is precisely characterized by the asymptotic behavior of the Euclidean norms of the array of alternatives under consideration. That the consistency set of the LR test can be completely characterized in terms of the norm its test statistic is based on seems natural, but is quite specific to the LR test, see Theorem~3.1 and the ensuing discussion in \cite{kp2}.

\section{Superconsistency points}\label{sec:supcp}

\subsection{Improving on the LR test}\label{sec:improving}

Although the LR test is a canonical choice of a test for the testing problem~\eqref{eqn:tp}, there are many other reasonable tests available. For example, classic results by~\cite{birnbaum1955} and~\cite{stein1956} show that any test with convex acceptance region (i.e., the complement of its rejection region) is admissible. \citeauthor{anderson1955integral}'s (\citeyear{anderson1955integral}) theorem implies that if the acceptance region is furthermore symmetric around the origin then the test is also unbiased. Thus, any convex symmetric (around the origin) set delivers an admissible unbiased test, which is hence reasonable from a non-asymptotic point of view. 

One class of tests that is intimately related to the LR tests consists of tests based on other~$p$-norms than the Euclidean one. For~$\bm{x} = (x_1, \hdots, x_d)' \in \R^d$ and~$p \in (0, \infty]$, define the~$p$-norm as usual via\footnote{Strictly speaking,~$||\cdot||_p$ defines a norm on~$\R^d$ only for~$p\in[1,\infty]$ and a quasi-norm for~$p \in (0, 1)$.}
\begin{equation*}
\|\bm{x}\|_p = 
\begin{cases}
\left(\sum_{i = 1}^d |x_i|^p\right)^{\frac{1}{p}} & \text{if } p < \infty, \\
\max_{i = 1, \hdots, d} |x_i| & \text{else}.
\end{cases}
\end{equation*}
In analogy to the LR test,~$p$-\emph{norm based} tests reject if the~$p$-norm of the observation vector exceeds a critical value~$\kappa_{d,p}$. Special cases, which have an established tradition in high-dimensional inference, are the~$1$- and the supremum norm. We shall denote the sequence of tests~$\{\mathds{1}\{\|\cdot \|_p \geq \kappa_{d,p}\}\}$ by~$\{p, \kappa_{d,p}\}$. Clearly,~$p$-norm based tests are unbiased and admissible for~$p\in[1,\infty]$ as a consequence of the discussion in the first paragraph of this section.

Concerning the consistency sets~$\mathscr{C}(\{p, \kappa_{d,p}\})$ of general~$p$-norm based tests, it is a somewhat surprising fact that
\begin{enumerate}[label=(\roman*)]
\item~$\mathscr{C}(\{p, \kappa_{d,p}\})\subsetneqq \mathscr{C}(\{q, \kappa_{d,q}\})$ for~$0 < p < q < \infty$, i.e., strictly larger exponents~$p$ result in \emph{strictly} larger consistency sets; and 
\item that this ranking does not extend to~$q = \infty$, in the sense that there are alternatives the supremum norm based test is not consistent against but against which the LR test is consistent and vice versa;
\end{enumerate}
see~\cite{kp2} for formal statements.\footnote{Recall that throughout the present article we implicitly impose the condition that all tests have asymptotic size in~$(0, 1)$ if not otherwise mentioned.} From (i) it follows that any~$p$-norm based test with~$p \in (2, \infty)$ has a \emph{strictly} larger consistency set than the LR test. We stress that this asymptotic strict domination of the LR test in terms of consistency sets is not in contradiction to its admissibility for each~$d\in\N$.

Other tests that strictly dominate the LR test can be obtained, e.g., through combination procedures that enhance the LR test with a sequence of tests~$\eta_d$ that is sensitive against alternatives of a different ``type'' than the LR test in the sense that~$$\mathscr{C}(\eta_d) \setminus \mathscr{C}(\{2, \kappa_{d,2}\}) \neq \emptyset.$$ To see how this can be achieved, note that the consistency set of the sequence of tests~$\psi_d$, say, where~$\psi_d$ rejects if the LR test \emph{or}~$\eta_d$ rejects, contains~$\mathscr{C}(\{2, \kappa_{d,2}\}) \cup \mathscr{C}(\eta_d)$, and hence dominates the LR test in terms of consistency. Essentially, this is the power enhancement principle of~\cite{fan2015}; see~\cite{kp1} for related results. Note that if~$\eta_d$ has asymptotic size~$0$, which is an assumption imposed on~$\eta_d$ in the context of the power enhancement principle, nothing is lost in terms of asymptotic size when using~$\psi_d$ instead of the LR test, because both sequences of tests then have the same asymptotic size.\footnote{If~$\eta_d$ has a positive asymptotic size that is smaller than the asymptotic size targeted in the final combination test, one can work with a LR test with small enough asymptotic size in the combination procedure to obtain a test that dominates the LR test in terms of consistency (recall from Theorem~\ref{thm:ing} that the consistency set of the LR test does not depend on the specific value of the asymptotic size).}
 
To clarify how much can possibly be gained in terms of consistency by using a sequence of tests~$\varphi_d$ other than the LR test, we shall consider the corresponding set~$$\mathscr{C}(\varphi_d) \setminus \mathscr{C}(\{2, \kappa_{d,2}\}),$$ which we refer to as the \emph{superconsistency points} of the sequence of tests~$\varphi_d$ (relative to the LR test). Note that the set of superconsistency points is defined for any sequence of tests, regardless of whether it dominates the LR test or not (in the sense that its consistency set includes that of the LR test).\footnote{To provide an example, for any $p \in (2,\infty)$ the set of superconsistency points of the $p$-norm based test is fully characterized by Corollary 3.2 in Kock and Preinerstorfer 	(2021), cf.~also their Theorem 3.4 which essentially shows that these superconsistency points are approximately sparse and have at least one large entry.} On a conceptual level, superconsistency points are related to superefficiency points of estimators relative to the maximum likelihood estimator in classic parametric theory.

\subsection{The relative volume of the set of superconsistency points}\label{sec:relvol}

The central question we consider in this article is how ``large'' the set of superconsistency points~$\mathscr{C}(\varphi_d) \setminus \mathscr{C}(\{2, \kappa_{d,2}\})$ can possibly be for a sequence of tests~$\varphi_d$ with asymptotic size in~$(0, 1)$.  Note that the larger~$\mathscr{C}(\varphi_d) \setminus \mathscr{C}(\{2, \kappa_{d,2}\})$ is, the larger is the set of alternatives the sequence of tests~$\varphi_d$ is consistent against but the LR test is not consistent against.  Although we already know from the examples discussed in Section~\ref{sec:improving} that~$\mathscr{C}(\varphi_d) \setminus \mathscr{C}(\{2, \kappa_{d,2}\})$ is non-empty for many~$\varphi_d$, we here investigate whether one can \emph{substantially} enlarge the consistency set by using another test than the LR test. 

To make the above question amenable to a formal treatment, note that Theorem~\ref{thm:ing} implies that for any sequence of LR tests~$\{2, \kappa_{d,2}\}$ with asymptotic size~$\alpha \in (0, 1)$, the complement of~$\mathscr{C}(\{2, \kappa_{d,2}\})$ satisfies 
\begin{equation*}
\bm{\Theta} \setminus \mathscr{C}(\{2, \kappa_{d,2}\}) \supseteq \bigtimes_{d = 1}^{\infty} \mathbb{B}_2^d(r_d)
\end{equation*}
if the sequence~$r_d > 0$ is such that~$r_d/d^{1/4}$ is bounded and
where~$\mathbb{B}_2^d(r)$ denotes the Euclidean ball with radius~$r$ centered at the origin. That is, the LR test is inconsistent against any element of~$\bigtimes_{d = 1}^{\infty} \mathbb{B}_2^d(r_d)$. We now investigate how many inconsistency points of the LR test can be removed from any such benchmark~$\bigtimes_{d = 1}^{\infty} \mathbb{B}_2^d(r_d)$ by erasing all superconsistency points of a sequence of tests~$\varphi_d$. 

\begin{figure}
\centering
\includegraphics[width=0.7\linewidth]{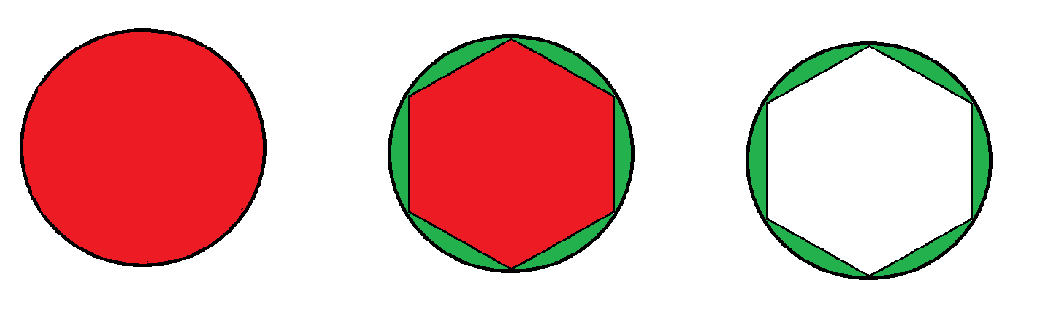}
\caption{Illustration of $\mathbb{B}_2^d(r_d)$ (red), $\mathbb{D}_d \subseteq \mathbb{B}_2^d(r_d)$ and $\mathbb{D}_d$ (green) for~$d = 2$.}
\label{fig:bitet}
\end{figure}

Formally, this is to be understood in the following sense: let~$\varphi_d$ be a sequence of tests with consistency set~$\mathscr{C}(\varphi_d)$ and let~$r_d$ be such that~$r_d/d^{1/4}$ is bounded. Let~$\mathbb{D}_d \subseteq  \mathbb{B}_2^d(r_d)$ be such that~$$\bigtimes_{d = 1}^{\infty} \mathbb{D}_d \subseteq \mathscr{C}(\varphi_d).$$ Note that all elements of~$\bigtimes_{d = 1}^{\infty} \mathbb{D}_d$ are superconsistency points of~$\varphi_d$ which are also contained in the benchmark~$\bigtimes_{d = 1}^{\infty} \mathbb{B}_2^d(r_d)$ (cf.~the illustration in Figure~\ref{fig:bitet}). Denoting by~$\text{vol}_d$ the~$d$-dimensional Lebesgue measure, we investigate the asymptotic behavior of the~\emph{relative} volume measure 
\begin{equation}\label{eqn:limc1p}
\frac{\mathrm{vol}_d\left(\mathbb{D}_d\right)}{\mathrm{vol}_d\left(\mathbb{B}_2^d(r_d)\right)}.
\end{equation} 
Obviously, the ratio in~\eqref{eqn:limc1p} is a number in~$[0, 1]$. On the one hand, if this ratio is asymptotically close to~$1$, this means that, in terms of relative volume, many elements of the benchmark~$\bigtimes_{d = 1}^{\infty} \mathbb{B}_2^d(r_d)$ are superconsistency points of the sequence of tests~$\varphi_d$. That is, one can \emph{substantially} improve upon the LR test by using~$\varphi_d$ (or by combining the LR test with~$\varphi_d$ through the power enhancement principle). On the other hand, if this ratio is asymptotically close to~$0$, this means that in terms of relative volume only few elements of the benchmark are superconsistency points of~$\varphi_d$. 

\begin{remark}
One could also study the asymptotic behavior of the sequences $\mathrm{vol}_d\del[1]{\mathbb{B}_2^d(r_d)} - \mathrm{vol}_d\left(\mathbb{D}_d\right)$ or~$\mathrm{vol}_d\left(\mathbb{D}_d\right)$ in order to determine whether one can substantially improve upon the LR test. However, these sequences both converge to~$0$. To see this, just note that
\begin{equation*}
	\mathrm{vol}_d\left(\mathbb{B}_2^d(r_d)\right) = \frac{\pi^{d/2}}{\Gamma(d/2 + 1)} r_d^d \to 0,
\end{equation*}
in case~$r_d/d^{1/4}$ is bounded as a consequence of Stirling's approximation to the gamma function as well as~$\mathbb{D}_d \subseteq  \mathbb{B}_2^d(r_d)$. Thus, such ``absolute'' volume measures are uninformative, since even the absolute volume of~$\mathbb{B}_2^d(r_d)$ tends to zero.
\end{remark}

\begin{remark}
One may argue that rather than~\eqref{eqn:limc1p} one should study~$$\frac{\mathrm{vol}_d\left(\mathbb{D}_d\right)}{\mathrm{vol}_d(\mathrm{proj}_d[\bm{\Theta} \setminus \mathscr{C}(\{2, \kappa_{d,2}\})])},$$ $\mathrm{proj}_d(\cdot)$ denoting the projection onto the~$d$th coordinate of its argument. However, since~$$\bigtimes_{d=1}^K\R^d\times \bigtimes_{d=K+1}^\infty\cbr[0]{\bm{0}_d} \subseteq\bm{\Theta} \setminus \mathscr{C}(\{2, \kappa_{d,2}\})$$ for all~$K\in\N$, it follows that~$\mathrm{vol}_d(\mathrm{proj}_d[\bm{\Theta} \setminus \mathscr{C}(\{2, \kappa_{d,2}\})])=\infty$ for all~$d\in\N$.
\end{remark}

We emphasize that using the (normalized) Lebesgue measure to assess the asymptotic magnitude of the set of superconsistency points is one among many possible choices. Other measures would be possible too, but the uniform prior over~$\mathbb{B}_2^d(r_d)$ is a natural choice as in many situations there is no clear guidance concerning the type of alternative one wishes to favor.\footnote{Our results remain valid if, instead of measuring the magnitude of~$\mathbb{D}_d$ w.r.t.~the uniform probability measure on~$\mathbb{B}_2^d(r_d)$, one measures its magnitude w.r.t.~the uniform probability measure on the  Euclidean sphere of radius~$r_d$. We will comment on this in Remark~\ref{rem:surf}, but will focus on the uniform distribution on~$\mathbb{B}_2^d(r_d)$ throughout the article.}

Note that the ratio in~\eqref{eqn:limc1p} depends on two ingredients:
\begin{enumerate}
\item the benchmark~$\bigtimes_{d = 1}^{\infty} \mathbb{B}_2^d(r_d)$;  
\item the sequence of superconsistency points~$\bigtimes_{d = 1}^{\infty} \mathbb{D}_d$ which depends on the sequence of tests~$\varphi_d$.
\end{enumerate}
Therefore, one could suspect that the asymptotic behavior of~\eqref{eqn:limc1p} depends in a complicated way on the interplay between these two components. Nevertheless, it turns out that the asymptotic behavior of~\eqref{eqn:limc1p} has a simple description that does not depend on any of the two ingredients just described. In fact, we shall prove in Section~\ref{sec:general} that the limit of the sequence is~$0$ for \emph{all} sequences of tests~$\varphi_d$. Hence, it is impossible to improve on the LR test in terms of the magnitude of its consistency set apart from a set of superconsistency points that is negligible in a relative volume sense.

In the following Section~\ref{sec:pproof}, we shall first establish this result for~$\varphi_d$ a sequence of~$p$-norm based tests with~$p \in (2, \infty)$.  Note that all these tests have a strictly larger consistency set than the LR test as discussed in Section~\ref{sec:improving}. A general result, the proof of which is a bit more involved, will be presented in Section~\ref{sec:general}.

\section{$p$-norm based tests}\label{sec:pproof}
We now consider the asymptotic behavior of the sequence~\eqref{eqn:limc1p} for the special case where~$\varphi_d$ is a sequence of~$p$-norm based tests with~$p \in (2, \infty)$ being fixed. For this class of tests, we can exploit the characterization of their consistency sets provided in Theorem 3.1 and Corollary 3.2 of~\cite{kp2}, together with results from asymptotic geometry developed in~\cite{ss} based on earlier results in~\cite{sz}. These ingredients lead to a direct proof of the limit of the sequence in~\eqref{eqn:limc1p} being~$0$. 
\begin{theorem}\label{thm:nogain}
Let~$p \in (2, \infty)$ and let the sequence of critical values~$\kappa_{d,p}$ be such that~$\{p, \kappa_{d,p}\}$ has asymptotic size~$\alpha \in (0, 1)$. Then, for any sequence~$r_d > 0$ such that~$r_d/d^{1/4}$ is bounded, and any sequence of non-empty Borel sets~$\mathbb{D}_d \subseteq \mathbb{B}_2^d(r_d)$ such that~
\begin{equation}\label{eqn:dpf}
\bigtimes_{d = 1}^{\infty} \mathbb{D}_d \subseteq \mathscr{C}(\{p, \kappa_{d,p}\}),
\end{equation}
we have
\begin{equation*}
\lim_{d \to \infty} \frac{\mathrm{vol}_d\left(\mathbb{D}_d\right)}{\mathrm{vol}_d\left(\mathbb{B}_2^d(r_d)\right)} = 0.
\end{equation*} 
\end{theorem}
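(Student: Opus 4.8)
\textbf{The plan.} The first step is to translate the \emph{array} condition~\eqref{eqn:dpf} into a \emph{uniform pointwise} lower bound on the $p$-norms of the vectors in $\mathbb{D}_d$. The statistic underlying $\{p,\kappa_{d,p}\}$ is $\sum_{i=1}^d|y_{i,d}|^p$, which under the null has mean $d\mu_p$ and standard deviation of order $\sqrt d$, where $\mu_p=\E|\eps_1|^p$; hence $\kappa_{d,p}^p = d\mu_p + O(\sqrt d)$. Under $\bm{\theta}_d$ its mean equals $d\mu_p + \sum_{i=1}^d g_p(\theta_{i,d})$, with $g_p(t):=\E|t+\eps_1|^p-\mu_p\ge 0$ satisfying $g_p(t)\asymp t^2$ near $0$ and $g_p(t)\asymp|t|^p$ for large $|t|$. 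The characterization of $\mathscr{C}(\{p,\kappa_{d,p}\})$ in Theorem~3.1 and Corollary~3.2 of~\cite{kp2} amounts, in the present regime, to consistency holding iff $d^{-1/2}\sum_{i=1}^d g_p(\theta_{i,d})\to\infty$. I would then argue that~\eqref{eqn:dpf} forces $d^{-1/2}\inf_{\bm{\theta}\in\mathbb{D}_d}\sum_{i=1}^d g_p(\theta_i)\to\infty$: if this infimum stayed bounded along a subsequence, one could select $\bm{\theta}_d\in\mathbb{D}_d$ (using $\mathbb{D}_d\neq\emptyset$) realizing bounded values infinitely often, producing an array in $\bigtimes_d\mathbb{D}_d$ that is not consistent, contradicting~\eqref{eqn:dpf}.

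Combining this with the constraint $\mathbb{D}_d\subseteq\mathbb{B}_2^d(r_d)$ is what makes $p>2$ bite. Since $g_p(t)\le C(t^2+|t|^p)$, every $\bm{\theta}\in\mathbb{D}_d$ satisfies $\sum_i g_p(\theta_i)\le C\|\bm{\theta}\|_2^2 + C\|\bm{\theta}\|_p^p\le Cr_d^2+C\|\bm{\theta}\|_p^p$, and $r_d^2=O(\sqrt d)$ by assumption. Because the left-hand side, divided by $\sqrt d$, must diverge uniformly on $\mathbb{D}_d$, the Euclidean term is negligible, and I obtain a threshold $t_d$ with $t_d/\sqrt d\to\infty$ such that, eventually, $\mathbb{D}_d\subseteq S_d:=\{\bm{\theta}\in\mathbb{B}_2^d(r_d):\|\bm{\theta}\|_p^p\ge t_d\}$. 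Hence $\mathrm{vol}_d(\mathbb{D}_d)/\mathrm{vol}_d(\mathbb{B}_2^d(r_d))\le \P_{U_d}(\|\bm{\theta}\|_p^p\ge t_d)$, where $U_d$ is the uniform distribution on $\mathbb{B}_2^d(r_d)$.

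The remaining and decisive step is a volume estimate showing this probability vanishes; this is where the geometry of $p$-norm balls enters. Writing a uniform draw as $\bm{\theta}=r_d\,B^{1/d}\bm{\omega}$ with $\bm{\omega}$ uniform on the unit sphere and $B$ uniform on $[0,1]$ independent, symmetry gives $\E_{U_d}\|\bm{\theta}\|_p^p=\tfrac{d}{d+p}\,r_d^p\,d\,\E|\omega_1|^p$. The classical asymptotics for marginals of the uniform distribution on $S^{d-1}$ (as in~\cite{sz} and~\cite{ss}) give $\E|\omega_1|^p\sim \mu_p d^{-p/2}$, so $\E_{U_d}\|\bm{\theta}\|_p^p \sim \mu_p r_d^p d^{1-p/2}=O(d^{1-p/4})$ using $r_d=O(d^{1/4})$. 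Since $p>2$ we have $1-p/4<1/2$, whence $\E_{U_d}\|\bm{\theta}\|_p^p=o(\sqrt d)=o(t_d)$, and Markov's inequality yields $\P_{U_d}(\|\bm{\theta}\|_p^p\ge t_d)\le \E_{U_d}\|\bm{\theta}\|_p^p/t_d\to 0$. Equivalently, in the language of~\cite{ss}, the $\ell_p$-ball $\mathbb{B}_p^d(t_d^{1/p})$ captures an asymptotically full fraction of the volume of $\mathbb{B}_2^d(r_d)$, so its complement $S_d$ is relatively negligible.

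\textbf{Main obstacle.} The conceptual heart is the quantitative gap opened by $p>2$: consistency of the $p$-norm test against points in the Euclidean ball requires $\|\bm{\theta}\|_p^p\gg\sqrt d$, whereas a \emph{typical} point of that ball has $\|\bm{\theta}\|_p^p$ only of order $d^{1-p/4}=o(\sqrt d)$. Technically, the step I would treat most carefully is the uniformization argument turning the array-level condition~\eqref{eqn:dpf} into a uniform pointwise $\ell_p$-lower bound on $\mathbb{D}_d$ (including measurability of the infimum and the subsequence selection), together with verifying that for $\bm{\theta}$ ranging over the small ball $\mathbb{B}_2^d(r_d)$ the variance of the test statistic under the alternative stays of order $d$, so that the mean-based characterization of~\cite{kp2} is indeed the operative one.
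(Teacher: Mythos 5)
Your proposal is correct, and its first half coincides with the paper's: both reduce the array-level hypothesis~\eqref{eqn:dpf}, via the characterization of~$\mathscr{C}(\{p,\kappa_{d,p}\})$ in Corollary~3.2 of Kock and Preinerstorfer (2021) together with~$\|\bm{\theta}\|_2^2 \leq r_d^2 = O(\sqrt{d})$ on~$\mathbb{B}_2^d(r_d)$, to a \emph{uniform} lower bound~$\inf_{\bm{\theta}\in\mathbb{D}_d}\|\bm{\theta}\|_p^p \gg \sqrt{d}$ (the paper phrases this as~$\tilde{s}_d/d^{1/(2p)}\to\infty$ for~$\tilde s_d = \inf_{\bm\theta\in\mathbb{D}_d}\|\bm\theta\|_p$; your subsequence argument is the right way to justify it). Where you genuinely diverge is the volume estimate. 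The paper encloses~$\mathbb{B}_2^d(r_d)\setminus\mathbb{D}_d$ by~$\mathbb{B}_2^d(r_d)\cap\mathbb{B}_p^d(s_d)$, rescales both balls to unit volume, and invokes the Schechtman--Schmuckensch\"ager threshold theorem on asymptotic volumes of intersections of normalized~$p$-balls, plus a gamma-ratio inequality to check that the relevant dilation factor diverges. You instead bound~$\mathrm{vol}_d(\mathbb{D}_d)/\mathrm{vol}_d(\mathbb{B}_2^d(r_d))$ by~$\P_{U_d}(\|\bm{\theta}\|_p^p\ge t_d)$ and compute the first moment~$\E_{U_d}\|\bm{\theta}\|_p^p = \tfrac{d}{d+p}\,r_d^p\, d\,\E|\omega_1|^p \sim \mu_p r_d^p d^{1-p/2} = O(d^{1-p/4}) = o(\sqrt d)$ (the identity~$\E|\omega_1|^p = \mu_p/\E\|\bm{g}\|_2^p$, from independence of~$\bm g/\|\bm g\|_2$ and~$\|\bm g\|_2$ for Gaussian~$\bm g$, makes this exact), then finish with Markov's inequality. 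This is more elementary and self-contained --- it isolates precisely why~$p>2$ bites ($1-p/4<1/2$) without importing the intersection-volume machinery --- at the cost of yielding only convergence to zero rather than the sharp ``full-volume'' statement and the implicit rate the Schechtman--Schmuckensch\"ager route provides. The only points to tighten in a write-up are the ones you already flag: the measurable selection behind the uniformization, and either citing the Kock--Preinerstorfer characterization in its stated form~$d^{-1/2}(\|\bm{\theta}_d\|_2^2\vee\|\bm{\theta}_d\|_p^p)\to\infty$ directly (which suffices and avoids re-deriving it through~$g_p$) or verifying the two-sided equivalence~$\sum_i g_p(\theta_i)\asymp \|\bm{\theta}\|_2^2\vee\|\bm{\theta}\|_p^p$ with constants depending only on~$p$.
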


\begin{proof}
Let~$\{p, \kappa_{d,p}\}$,~$r_d$, and~$\mathbb{D}_d$ be as in the statement of the theorem. Corollary~3.2 in~\cite{kp2} shows that~$\bm{\vartheta} \in \mathscr{C}(\{p, \kappa_{d,p}\})$ if and only if~$d^{-1/2} (\|\bm{\theta}_d\|_2^2 \vee \|\bm{\theta}_d\|_p^p) \to \infty$. Together with~$r_d/d^{1/4}$ being bounded, ~$\mathbb{D}_d \subseteq \mathbb{B}_2^d(r_d)$, and~\eqref{eqn:dpf}, this guarantees that~$\tilde{s}_d/d^{1/(2p)} \to \infty$ for~$\tilde{s}_d := \inf\{ \|\bm{\theta}_d\|_p: \bm{\theta}_d \in \mathbb{D}_d \}$. 
The definition of~$\tilde{s}_d$ implies~$$\mathbb{G}_d := \mathbb{B}_2^d(r_d) \setminus \mathbb{D}_d \supseteq \mathbb{B}_2^d(r_d) \cap \mathbb{B}_p^d(\tilde{s}_d/2).$$
Define the sequence~$s_d := d^{1/(2p) - 1/4} r_d > 0$, so that~$s_d/d^{1/(2p)} = r_d/d^{1/4}$ is bounded. Hence, eventually~$\tilde{s}_d \geq 2s_d$ and thus~$\mathrm{vol}_d(\mathbb{G}_d) \geq \mathrm{vol}_d(\mathbb{B}_2^d(r_d) \cap \mathbb{B}_p^d(s_d))$ holds, so that the quotient~
\begin{equation*}
1 - \frac{\mathrm{vol}_d\left(\mathbb{D}_d\right)}{\mathrm{vol}_d(\mathbb{B}_2^d(r_d))} = \frac{\mathrm{vol}_d\left(\mathbb{G}_d\right)}{\mathrm{vol}_d(\mathbb{B}_2^d(r_d))} 
\end{equation*}
is eventually not smaller than
\begin{equation*}
\frac
{\mathrm{vol}_d\left(
\mathbb{B}_2^d(r_d) \cap \mathbb{B}_p^d(s_d)
\right)}
{\mathrm{vol}_d\left(
\mathbb{B}_2^d(r_d)
\right)} = \frac
{\mathrm{vol}_d\left(
\mathbb{B}_2^d(e_{d,2}) \cap \mathbb{B}_p^d(e_{d,2} s_d/r_d)
\right)}
{\mathrm{vol}_d\left(
\mathbb{B}_2^d(e_{d,2})
\right)}= \mathrm{vol}_d
\left(
\mathbb{B}_2^d(e_{d,2}) \cap u_d  \mathbb{B}_p^d(e_{d,p})
\right),
\end{equation*}
where~$u_d := \frac{e_{d,2}}{e_{d,p}} \frac{d^{1/(2p)}}{d^{1/4}}$,~$e_{d,p} := \frac{1}{2} \frac{\Gamma(1+d/p)^{1/d}}{\Gamma(1+1/p)}$, and consequently~$\mathrm{vol}_d(
\mathbb{B}_2^d(e_{d,2})
) = 1$. The main result in \cite{ss} shows that for every~$t$ large enough~$\mathrm{vol}_d
(
\mathbb{B}_2^d(e_{d,2}) \cap t \mathbb{B}_p^d(e_{d,p})
) \to 1$, as~$d \to \infty$. Therefore, we are done upon verifying that~$u_d \to \infty$. This follows from the lower bound
\begin{equation*}
\frac{e_{d,2}}{e_{d,p}} =
\left[\frac{\Gamma(1+d/2)}{\Gamma(1+d/p)}\right]^{1/d}
\frac{\Gamma(\frac{1}{p} + 1)}{\Gamma(\frac{1}{2} + 1)} \geq
\left[d/p\right]^{1/2-1/p}
\frac{\Gamma(\frac{1}{p} + 1)}{\Gamma(\frac{1}{2} + 1)}
,
\end{equation*}
where we used the inequality for ratios involving the gamma function in Equation~12 of~\cite{jameson} with~``$x = 1+d/p$'' (which is not smaller than~$1$) and~``$y = d(1/2 - 1/p)$'' (which is not smaller than~$0$).
\end{proof}

Hence, even though~$\mathscr{C}(\{p, \kappa_{d, p}\})$ contains the consistency set of the LR test as a \emph{strict} subset for every~$p \in (2, \infty)$ as discussed in Section~\ref{sec:improving}, the subset of those alternatives in each benchmark~$\bigtimes_{d = 1}^{\infty} \mathbb{B}_2^d(r_d)$ for which the test~$\{p, \kappa_{d, p}\}$ provides an improvement over the LR test is ``negligible'' in (relative) volume. That this result is not specific to~$p$-norm based tests, but extends to \emph{all} tests will be shown next.

\section{Unrestricted sequences of tests}\label{sec:general}

The proof of Theorem~\ref{thm:nogain} builds heavily on the particular structure of the consistency set of~$p$-norm based tests. We shall now establish that \emph{no} test can improve substantially on the LR test. In the absence of any structure on the tests, one can no longer exploit specific properties of the consistency set stemming from the test being based on a~$p$-norm. Instead we rely on concentration results for Lipschitz continuous functions on the sphere as exposited in~\cite{braz} or \cite{vershynin_2018}.
\begin{theorem}\label{thm:nogain2}
For every sequence of tests~$\psi_d$ with asymptotic size~$\alpha \in (0, 1)$ and every sequence~$r_d > 0$ such that~$r_d/d^{1/4}$ is bounded, there exists an~$\epsilon > 0$, such that for every sequence of non-empty Borel sets~$\mathbb{D}_d \subseteq \mathbb{B}_2^d(r_d)$ satisfying
\begin{equation}\label{eqn:inclconsset}
\bigtimes_{d = 1}^{\infty} \mathbb{D}_d \subseteq \mathscr{C}(\psi_d),
\end{equation} 
we have
\begin{equation}\label{eqn:rate}
\frac{\mathrm{vol}_d\left(\mathbb{D}_d\right)}{\mathrm{vol}_d\left(\mathbb{B}_2^d(r_d)\right)} \leq 2 \exp\left( - \epsilon^2 (d-1)/ (2r_d^2) \right) \quad \text{ for all } d \text{ large enough;}
\end{equation}
in particular~$\mathrm{vol}_d\left(\mathbb{D}_d\right)/\mathrm{vol}_d(\mathbb{B}_2^d(r_d))$ converges to~$0$ as~$d \to \infty$.
\end{theorem}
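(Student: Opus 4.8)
The plan is to reduce the problem to a concentration-of-measure statement on the Euclidean sphere. The key observation is that, although we no longer have a clean characterization of the consistency set $\mathscr{C}(\psi_d)$ in terms of a norm, we can still extract a crucial quantitative constraint: if $\psi_d$ has asymptotic size $\alpha < 1$, then $\psi_d$ cannot be consistent against alternatives that are, in an appropriate sense, ``too close'' to the origin. More precisely, I would first argue that any array in $\bigtimes_{d=1}^\infty \mathbb{D}_d \subseteq \mathscr{C}(\psi_d)$ must have Euclidean norms growing fast enough that $d^{-1/2}\|\bm{\theta}_d\|_2^2 \to \infty$ is \emph{necessary} — no, this is false in general (that is exactly the LR characterization). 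Instead, the right move is to use that $\psi_d$ has \emph{nontrivial} acceptance probability under the null: because $\E(\psi_d(\bm{\varepsilon}_d)) \to \alpha < 1$, the acceptance region $A_d = \{\bm y : \psi_d(\bm y) < 1\}$ (or a suitable level set) carries nonvanishing Gaussian mass. I would combine this with the consistency requirement to show that $\mathbb{D}_d$ must be uniformly bounded away from the ``bulk'' of the Gaussian, giving a $\delta > 0$ with a separation property.

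Concretely, I would set up the argument as follows. Fix $\beta \in (\alpha, 1)$ and let $c_d$ be chosen so that the randomized test at level $\beta$ has a well-defined acceptance set of Gaussian probability at least $1 - \beta > 0$ for all large $d$. The consistency of $\psi_d$ against every point $\bm{\theta}_d \in \mathbb{D}_d$ forces $\E(\psi_d(\bm{\theta}_d + \bm{\varepsilon}_d)) \to 1$ uniformly is too strong; rather, I exploit that for a \emph{fixed} array the rejection probability tends to $1$, so for each large $d$ the shifted Gaussian measure of the acceptance region is small. The technical device I would use is the Gaussian isoperimetric / Lipschitz concentration inequality: the map $\bm y \mapsto \P_{\bm\varepsilon}(\text{accept} \mid \text{shift } \bm y)$, or more directly the indicator structure, lets me deduce that the set of consistency points on a sphere of radius $\rho$ can have large surface measure only if $\rho$ is large. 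The cleanest route is to work on the sphere $\rho \mathbb{S}^{d-1}$ and apply the spherical concentration inequality from \cite{braz} or \cite{vershynin_2018}: a subset of the sphere that is separated from a fixed ``half'' (the complement of a geodesic ball of measure $\geq$ constant) has surface measure at most $2\exp(-\epsilon^2(d-1)/(2\rho^2))$.

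The main obstacle — and the step I expect to require the most care — is establishing the geometric separation that activates spherical concentration, \emph{without} any norm structure on $\psi_d$. The argument must convert the probabilistic statement ``$\E(\psi_d(\bm\theta_d+\bm\varepsilon_d))\to 1$ while $\E(\psi_d(\bm\varepsilon_d))\to\alpha<1$'' into ``$\mathbb{D}_d$ is disjoint from a spherical cap of fixed measure.'' I anticipate doing this by a pairing/symmetrization argument: if $\mathbb{D}_d$ had large relative volume in $\mathbb{B}_2^d(r_d)$, then by the layer-cake (co-area) decomposition over spheres, some sphere of radius $\rho \leq r_d$ would carry a subset of $\mathbb{D}_d$ of large surface measure; by concentration, such a subset must intersect every fixed-measure cap, and in particular cannot be separated from a ``good'' region where $\psi_d$ accepts with nonvanishing probability. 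This would contradict consistency on that subset. Quantifying the resulting $\epsilon$ uniformly in $d$ is the delicate part, since $\epsilon$ must depend on $\psi_d$ and on the boundedness constant of $r_d/d^{1/4}$ but not on $\mathbb{D}_d$.

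Finally, once the bound $\mathrm{vol}_d(\mathbb{D}_d)/\mathrm{vol}_d(\mathbb{B}_2^d(r_d)) \leq 2\exp(-\epsilon^2(d-1)/(2r_d^2))$ is established for large $d$, the convergence to $0$ follows immediately: since $r_d/d^{1/4}$ is bounded, say $r_d^2 \leq C\sqrt{d}$, the exponent satisfies $\epsilon^2(d-1)/(2r_d^2) \gtrsim \epsilon^2 \sqrt{d}/(2C) \to \infty$, so the right-hand side tends to $0$. This last step is routine and I would dispatch it in a line.
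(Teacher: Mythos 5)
You have correctly identified the two main tools the paper uses --- L\'evy-type concentration on spheres combined with a polar-coordinate (layer-cake) decomposition of the relative volume --- and your final step (deducing convergence to $0$ from $r_d^2 \lesssim \sqrt{d}$) is fine. But there is a genuine gap at exactly the point you flag as ``the step I expect to require the most care'': you never supply the mechanism that converts consistency of $\psi_d$ into a quantitative deficit on each sphere. Your proposed route --- that the acceptance region has nonvanishing Gaussian mass under the null, so $\mathbb{D}_d$ must be ``separated from a spherical cap of fixed measure'' --- does not work as stated. The acceptance region lives in the \emph{sample} space and has no cap-like structure on the \emph{parameter} sphere $\rho\,\mathbb{S}^{d-1}$; more importantly, the size condition alone cannot bound the power on a sphere of radius $\rho \asymp d^{1/4}$ away from $1$: for $\rho \gg d^{1/4}$ even the LR test has power tending to $1$ everywhere on the sphere while its null acceptance probability remains $1-\alpha$. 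So ``nontrivial null acceptance'' is not the right input; something must tie the \emph{radius regime} $r_d = O(d^{1/4})$ to an upper bound on achievable power.

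The paper's missing ingredient is this: the function to which concentration is applied is the power function $\bm{\theta} \mapsto \E(\psi_d(\bm{\theta}+\bm{\varepsilon}_d))$, which is $1$-Lipschitz on $\R^d$ (via the total-variation bound $\|\bm{\theta}_1-\bm{\theta}_2\|_2/2$ between Gaussian shifts), hence $r$-Lipschitz on the unit sphere after rescaling; and its \emph{spherical average} is controlled by showing, via the Neyman--Pearson lemma applied to the uniform prior $\rho_{d,r}$ on $\mathbb{S}^{d-1}(r)$, that the LR test maximizes weighted average power among all size-$\alpha$ tests, so that $\int \E(\psi_d(\bm{\gamma}+\bm{\varepsilon}_d))\,\mathsf{d}\rho_{d,r}(\bm{\gamma}) \leq \beta_{d,\alpha_d}(r)$. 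Theorem~\ref{thm:ing} then gives $\limsup_d \beta_{d,\alpha_d}(r_d) =: \beta < 1$ because $r_d/d^{1/4}$ is bounded, one sets $\epsilon = (1-\beta)/2$, and L\'evy concentration around the (small) spherical mean yields $\rho_{d,r}(\mathbb{D}_d) \leq 2\exp(-\epsilon^2(d-1)/(2r^2))$ for every $r$; integrating in polar coordinates finishes the proof. Without the WAP-optimality step there is no baseline bounded away from $1$ for the concentration inequality to bite against, so your argument as written cannot be completed.
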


The proof of Theorem~\ref{thm:nogain2} can be found in Appendix~\ref{proofmain0s}. Note that Theorem~\ref{thm:nogain2} not only shows that the magnitude of superconsistency points of tests is asymptotically negligible for any test --- it also shows that the measure of these points converges to zero quickly in the dimension~$d$.

\begin{remark}[Spherical measure instead of relative volume]\label{rem:surf}
One could ask what happens in the context of Theorem~\ref{thm:nogain2} if, instead of considering~$\mathrm{vol}_d(\mathbb{D}_d)/\mathrm{vol}_d(\mathbb{B}_2^d(r_d))$ in~\eqref{eqn:rate}, one considers~$\rho_{d,r_d}(\mathbb{D}_d)$, where~$\rho_{d,r_d}$ denotes the uniform probability measure on the sphere~$$\mathbb{S}^{d-1}(r_d) := \{\xi \in \R^d: \|\xi\|_2 = r_d\}.$$ Inspection of the proof of Theorem~\ref{thm:nogain2} (cf.~Equation~\eqref{eqn:sphD}) shows that the statement equally holds with~$\mathrm{vol}_d(\mathbb{D}_d)/\mathrm{vol}_d(\mathbb{B}_2^d(r_d))$ replaced by~$\rho_{d,r_d}(\mathbb{D}_d)$. That is, also with this alternative measure, one reaches the same conclusion concerning the magnitude of the set of superefficiency points of a sequence of tests relative to the LR test.
\end{remark}

So far, all our results concerned consistency properties of tests. We were interested in the possible magnitude of the superconsistency points of a sequence of tests relative to the LR test and have seen that the magnitude of such points cannot be substantial. Although we now know that one cannot substantially improve on the LR test in terms of consistency (in the sense of Theorem~\ref{thm:nogain2}), there could in principle exist sequences of tests that have larger power than the LR test on substantial portions of the parameter space (without the power there being close to~$1$). A non-asymptotic question one can therefore ask is: how large can such portions of the parameter space be? To answer this question, we introduce some more notation: let~$\alpha \in [0, 1]$ and denote for every~$r > 0$ by~$\beta_{d, \alpha}(r)$ the power of the LR test of size~$\alpha$ against alternatives~$\bm{\theta} \in \R^d$ such that~$\|\bm{\theta}\|_2 = r$ (noting that the power of the LR test coincides for all such parameters as it is rotationally invariant).\footnote{With this notation it is worth noting that the proof of Theorem~\ref{thm:nogain2} shows that~$\epsilon$ in that theorem can be chosen as~$(1-\limsup_{d \to \infty} \beta_{d, \alpha_d}(r_d))/2,$ where~$\alpha_d$ denotes the size of~$\psi_d$.} Denote the set of all tests~$\psi: \R^d \to [0, 1]$ by~$\Psi_{d}$, and define for every~$\alpha \in [0, 1]$,~$\epsilon > 0$, and~$\psi \in \Psi_{d}$ the set~$\mathbb{F}_d(\epsilon, \psi)$ as the subset of parameters against which the power of~$\psi$ exceeds the power of the LR test of the same size as~$\psi$ by more than~$\epsilon$, i.e., 
\begin{equation}\label{eqn:inclconsset3}
\mathbb{F}_d(\epsilon, \psi) := \left\{\bm{\theta} \in \R^d: \mathbb{E}(\psi(\bm{\theta} + \bm{\varepsilon}_d)) - \beta_{d, \alpha}(\|\bm{\theta}\|_2) > \epsilon \text{ for } \alpha = \mathbb{E}(\psi(\bm{\varepsilon}_d)) \right\}.
\end{equation} 
The question is: how large can this set be made by cleverly choosing~$\psi$? The following theorem provides a non-asymptotic upper bound on its measure w.r.t.~to the uniform distribution~$\rho_{d, r}$ on~$\mathbb{S}_d^{d-1}(r)$. The upper bound decreases exponentially in~$d$.

\begin{proposition}\label{thm:nogain3}
For every~$\epsilon > 0$,~$r > 0$,~$d \in \N$ and~$\psi \in \Psi_{d}$, it holds that
\begin{align}\label{eqn:limmain}
\rho_{d, r} \left(  \mathbb{F}_d(\epsilon, \psi) \right) \leq 2 \exp\left( - \epsilon^2 (d-1)/ (2r^2) \right).
\end{align} 
\end{proposition}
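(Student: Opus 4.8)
The plan is to transfer everything onto the sphere $\mathbb{S}^{d-1}(r)$ and then combine a Neyman--Pearson optimality argument with a Lipschitz concentration inequality. Write $\pi_\psi(\bm\theta):=\mathbb{E}(\psi(\bm\theta+\bm\varepsilon_d))$ for the power function and $\alpha=\mathbb{E}(\psi(\bm\varepsilon_d))$ for the size of $\psi$. Since $\|\bm\theta\|_2=r$ is constant on $\mathbb{S}^{d-1}(r)$, the defining condition of $\mathbb{F}_d(\epsilon,\psi)$ becomes $\pi_\psi(\bm\theta)>\beta_{d,\alpha}(r)+\epsilon$, so that $\mathbb{F}_d(\epsilon,\psi)\cap\mathbb{S}^{d-1}(r)=\{\bm\theta\in\mathbb{S}^{d-1}(r):\pi_\psi(\bm\theta)>\beta_{d,\alpha}(r)+\epsilon\}$, and it suffices to bound the $\rho_{d,r}$-measure of this superlevel set.

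The key statistical step is to show that the $\rho_{d,r}$-averaged power of $\psi$ cannot exceed that of the LR test, i.e.\ $\int\pi_\psi\,d\rho_{d,r}\le\beta_{d,\alpha}(r)$. By Fubini, $\int\pi_\psi\,d\rho_{d,r}=\int\psi(\bm y)g_r(\bm y)\,d\bm y$, where $g_r$ is the density of $\bm\theta+\bm\varepsilon_d$ with $\bm\theta\sim\rho_{d,r}$ and $\bm\varepsilon_d$ an independent $\mathbb{N}(\bm 0_d,\bm I_d)$ vector with density $\phi_d$. The likelihood ratio equals $g_r(\bm y)/\phi_d(\bm y)=e^{-r^2/2}\int e^{\langle\bm y,\bm\theta\rangle}\,d\rho_{d,r}(\bm\theta)$, and by rotational invariance this is $\mathbb{E}[\cosh(r\|\bm y\|_2 T)]$ for a symmetric $T\in[-1,1]$, hence a strictly increasing function of $\|\bm y\|_2$. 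The Neyman--Pearson lemma therefore identifies the size-$\alpha$ test maximizing $\int\psi g_r$ as $\mathds{1}\{\|\cdot\|_2\ge\kappa\}$, exactly the LR test, whose averaged power over the sphere is the constant $\beta_{d,\alpha}(r)$. This gives $\int\pi_\psi\,d\rho_{d,r}\le\beta_{d,\alpha}(r)$, so the superlevel set above is contained in $\{\pi_\psi>\int\pi_\psi\,d\rho_{d,r}+\epsilon\}$.

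It then remains to verify that $\pi_\psi$ is $1$-Lipschitz in the Euclidean distance and to apply concentration. Differentiating under the integral gives $\nabla\pi_\psi(\bm\theta)=\mathbb{E}(\psi(\bm\theta+\bm\varepsilon_d)\bm\varepsilon_d)$, so for any unit vector $\bm u$, $|\bm u^\top\nabla\pi_\psi(\bm\theta)|\le\mathbb{E}|\bm u^\top\bm\varepsilon_d|=\sqrt{2/\pi}\le 1$ because $0\le\psi\le 1$; hence $\pi_\psi$ is $1$-Lipschitz, and so is its restriction to $\mathbb{S}^{d-1}(r)$. Invoking the concentration inequality for Lipschitz functions on the sphere from \cite{braz} or \cite{vershynin_2018} in its two-sided form around the mean, and scaling the unit-sphere statement by $r$ to produce the dimension factor $(d-1)$ and the radius factor $r^2$, yields $\rho_{d,r}(\pi_\psi>\int\pi_\psi\,d\rho_{d,r}+\epsilon)\le 2\exp(-\epsilon^2(d-1)/(2r^2))$. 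Combining this with the inclusion from the second step delivers \eqref{eqn:limmain}.

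The main obstacle I anticipate is the averaged-power inequality $\int\pi_\psi\,d\rho_{d,r}\le\beta_{d,\alpha}(r)$. Concentration by itself only controls fluctuations of $\pi_\psi$ about its \emph{own} mean on the sphere; it is precisely the Neyman--Pearson/monotone-likelihood-ratio computation --- showing that the spherically averaged alternative has a likelihood ratio increasing in $\|\bm y\|_2$, so that its most powerful size-$\alpha$ test is the LR test --- that pins this mean below $\beta_{d,\alpha}(r)$ and links the two halves of the argument. The only remaining care is bookkeeping of constants: one must cite the concentration result in the form that produces both the factor $2$ and the sharp exponent $(d-1)/(2r^2)$, rather than a version carrying an unspecified universal constant.
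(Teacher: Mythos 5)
Your proposal is correct and follows essentially the same route as the paper's proof: restriction to the sphere, the Neyman--Pearson plus rotational-invariance (symmetrization to $\cosh$) argument showing the LR test is weighted-average-power optimal against the uniform prior on $\mathbb{S}^{d-1}(r)$, hence $\int \pi_\psi \, \mathsf{d}\rho_{d,r} \leq \beta_{d,\alpha}(r)$, followed by L\'evy's concentration inequality for Lipschitz functions on the sphere. The only cosmetic difference is that you obtain the Lipschitz property of the power function by differentiating under the integral (which even yields the slightly better constant $\sqrt{2/\pi}$), whereas the paper derives it from the total variation bound between Gaussian location shifts.
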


\begin{remark}
Given a test~$\psi \in \Psi_d$, note that~$\mathbb{F}_d(\epsilon, \psi) \cap \mathbb{S}_d^{d-1}(r)$ is empty if~$\beta_{d, \alpha}(r) + \epsilon \geq 1$, where~$\alpha = \mathbb{E}(\psi(\bm{\varepsilon}_d))$. For such values of~$r$ and~$\epsilon$ it obviously holds that~$\rho_{d, r} \left(  \mathbb{F}_d(\epsilon, \psi) \right) = 0$. 
\end{remark}

The proof of Proposition~\ref{thm:nogain3} follows from the following ingredients: (i) L\'evy's concentration theorem, i.e., the fact that any Lipschitz continuous function on the sphere~$\mathbb{S}^{d-1}(r)$ concentrates around its average w.r.t.~$\rho_{d, r}$, see, e.g., Theorem 1.7.9 of \cite{braz}; (ii) the observation that power functions of tests in the model considered are Lipschitz continuous in the parameter vector; and (iii) the fact that the LR test maximizes (among all tests) the average power w.r.t.~$\rho_{d, r}$ against alternatives on the sphere~$\mathbb{S}^{d-1}(r)$. The proof of Theorem~\ref{thm:nogain2} is based on the inequality in Proposition~\ref{thm:nogain3}.

\section{Conclusion}

In high-dimensional testing problems, the choice of a test implicitly or explicitly determines the type of alternative it prioritizes. In the Gaussian sequence model, the LR test is based on the Euclidean norm. Many tests exist that are consistent against alternatives the LR test isn't consistent against (or are even consistent against strictly more alternatives than the LR test), i.e., they possess what we refer to as superconsistency points. We have shown that for any test, the corresponding set of superconsistency points is negligible in an asymptotic sense. This can be interpreted as a high-dimensional testing analogue of Le Cam's famous result that the set of superefficiency points relative to the maximum likelihood estimator is at most a Lebesgue null set, cf.~\cite{lecam1953}. In analogy to that classic finding, our result does not suggest that one should always use the LR test. But it shows that there exists no test for which one can expect substantial improvements.

\bibliographystyle{ims}
\bibliography{refs}

\appendix

\section{Proof of Theorem~\ref{thm:nogain2}}\label{proofmain0s}

Let the sequence of tests~$\psi_d$ and~$r_d$ be as in the theorem's statement. Denote the size of~$\psi_d$ by~$\alpha_d$. From~\eqref{eqn:inclconsset} we obtain
\begin{equation}\label{eqn:infproj}
c_d := \inf_{\bm{\theta} \in \mathbb{D}_d } \mathbb{E}\left( \psi_d(\bm{\theta} + \bm{\varepsilon}_d)\right)
\to 1.
\end{equation}
Since~$r_d/d^{1/4}$ is bounded and~$\alpha_d \to \alpha$, it follows from Theorem~\ref{thm:ing} that~$$\beta := \limsup_{d \to \infty} \beta_{d, \alpha_d}(r_d) < 1.$$ Define~$\epsilon = (1-\beta)/2$. For all~$d$ large enough we thus obtain~$c_d > \beta_{d, \alpha_d}(r_d) + \epsilon$. Together with~$\mathbb{D}_d \subseteq \mathbb{B}_2^d(r_d)$ and the function~$r \mapsto \beta_{d, \alpha_d}(r)$ being non-decreasing, it therefore follows that~$\mathbb{D}_d \subseteq \mathbb{F}_d(\epsilon, \psi_d)$ for all~$d$ large enough.\footnote{Throughout this proof we use the notation that was introduced in the context of Proposition~\ref{thm:nogain3}.} Proposition~\ref{thm:nogain3} hence allows us to conclude that for all~$d$ large enough, we have
\begin{equation}\label{eqn:sphD}
\rho_{d, r} \left(  \mathbb{D}_d \right) \leq \rho_{d, r} \left(  \mathbb{F}_d(\epsilon, \psi_d) \right) \leq 2 \exp\left( - \epsilon^2 (d-1)/ (2r^2) \right) \quad \text{ for every } r > 0.
\end{equation}
For every~$r > 0$, the push-forward measure of~$\rho_d := \rho_{d, 1}$ under the transformation~$\bm{\gamma} \mapsto r \bm{\gamma} $,~$\bm{\gamma}  \in \mathbb{S}^{d-1} := \mathbb{S}^{d-1}(1)$ is~$\rho_{d, r}$. Using polar coordinates (as in, e.g.,~\cite{stroock} Section 5.2) and~$\mathbb{D}_d \subseteq \mathbb{B}_2^d(r_d)$ we may express 
\begin{equation*}
\frac{\mathrm{vol}_d\left(\mathbb{D}_d\right)}{\mathrm{vol}_d\left(\mathbb{B}_2^d(r_d)\right)} = \frac{d}{r_d^d}
\int_{(0, r_d)} r^{d-1} \int_{\mathbb{S}^{d-1}} \mathds{1}_{\mathbb{D}_d}\{ r \bm{\gamma}  \} \mathsf{d} \rho_d(\bm{\gamma} ) \mathsf{d}  r = \frac{d}{r_d^d}
\int_{(0, r_d)} r^{d-1} \rho_{d, r} \left(  \mathbb{D}_d \right) \mathsf{d} r,
\end{equation*} 
which, for all~$d$ large enough, we can upper bound by
\begin{equation*}
\frac{2}{r_d^d}
\int_{(0, r_d)} d r^{d-1}\exp\left( - \epsilon^2 (d-1)/ (2r^2) \right) \mathsf{d} r \leq 2 \exp\left( - \epsilon^2 (d-1)/ (2r_d^2) \right).
\end{equation*}

\section{Proof of Proposition~\ref{thm:nogain3}}\label{proofmain02}

Let~$\psi \in \Psi_{d}$ and denote its size by~$\alpha$. If~$\alpha = 0$ or~$\alpha = 1$ the inequality in~\eqref{eqn:limmain} trivially holds as~$\mathbb{F}_d(\epsilon, \psi)  = \emptyset$ then follows. Hence, we only need to verify the claim for~$\alpha \in (0, 1)$. 

We next show that the LR test (for the testing problem~\eqref{eqn:tp} in the model~\eqref{eqn:model}) with size~$\alpha$ maximizes the ``weighted average power'' (WAP)
\begin{equation}
	\psi^* \mapsto \int_{\mathbb{S}^{d-1}(r)} \mathbb{E}(\psi^*(\bm{\gamma} + \bm{\varepsilon}_d)) \mathsf{d}  \rho_{d, r}(\bm{\gamma})
\end{equation}
among all tests~$\psi^* \in \Psi_d$ of size~$\alpha$: to see this, denote the $d$-variate normal density with mean~$\bm{\gamma}$ and identity covariance matrix by~$\phi_{\bm{\gamma}}$ and note that (by the Neyman-Pearson Lemma) the test which maximizes weighted average power (i.e., which is WAP optimal) is the likelihood ratio test for the simple hypothesis where (i) the density under the null equals~$\phi_{\bm{0}_d}$ and (ii) the density under the alternative equals~$\int \phi_{\bm{\gamma}} d\rho_{d, r}(\bm{\gamma})$. This test rejects for the observation~$y$ if and only if 
\begin{equation}\label{eqn:NPtest}
	\int_{\mathbb{S}^{d-1}(r)} \phi_{\bm{\gamma}}(y)/\phi_{\bm{0}_d}(y) \mathsf{d} \rho_{d, r}(\bm{\gamma}) = \exp(-r^2/2) \int_{\mathbb{S}^{d-1}(r)} \exp(\bm{\gamma}'y) \mathsf{d} \rho_{d, r}(\bm{\gamma})
\end{equation}
exceeds a critical value~$C_{\alpha, r}$, say, which is chosen such that~the test has size~$\alpha$.  Note that the measure~$\rho_{d, r}$ coincides with its push-forward measure under any orthonormal linear transformation~$U: \R^d \to \R^d$, i.e.,~$\rho_{d, r}$ is ``rotationally invariant". Choosing~$U$ orthonormal and such that~$U y$ coincides with~$\|y\|_2$ times the first element of the canonical basis of~$\R^d$, it follows that the integral to the right in~\eqref{eqn:NPtest} coincides with
\begin{equation*}
	\int_{\mathbb{S}^{d-1}(r)} \exp(\|y\|_2 \gamma_1) \mathsf{d} \rho_{d, r}(\bm{\gamma}) = \frac{1}{2} \left(\int_{\mathbb{S}^{d-1}(r)} \left[\exp(\|y\|_2 \gamma_1) + \exp(-\|y\|_2 \gamma_1)\right] \mathsf{d} \rho_{d, r}(\bm{\gamma})\right).
\end{equation*}
Since the function~$a \mapsto \exp(a \gamma_1) + \exp(- a \gamma_1)$ is non-decreasing on~$[0, \infty)$ for every~$\gamma_1$, it follows that the WAP optimal test rejects if and only if~$\|y\|_2$ exceeds a critical value (chosen so that the test has the right size). In other words the LR test (for the testing problem~\eqref{eqn:tp} in the model~\eqref{eqn:model}) is WAP optimal.

Because the LR test is WAP optimal~$\int_{\mathbb{S}^{d-1}(r)} \mathbb{E}(\psi(\bm{\gamma} + \bm{\varepsilon}_d)) \mathsf{d}  \rho_{d, r}(\bm{\gamma}) \leq \beta_{d, \alpha}(r).$ 
From this we can conclude that $\rho_{d, r} (  \mathbb{F}_d(\epsilon, \psi)) = \rho_{d, r} (  \mathbb{F}_d(\epsilon, \psi) \cap \mathbb{S}^{d-1}(r))$ is bounded from above by
\begin{equation}
	\rho_{d, r} \left( \left\{ \bm{\theta} \in \mathbb{S}^{d-1}(r): \mathbb{E}(\psi(\bm{\theta} + \bm{\varepsilon}_d)) \geq \int_{\mathbb{S}^{d-1}(r)} \mathbb{E}(\psi(\bm{\gamma} + \bm{\varepsilon}_d)) \mathsf{d}  \rho_{d, r} (\bm{\gamma}) + \epsilon \right\} \right),
\end{equation}
which we can equivalently express as
\begin{equation*}
	\rho_{d} \left( \left\{ \bm{\theta} \in \mathbb{S}^{d-1}: \mathbb{E}(\psi( r \bm{\theta} + \bm{\varepsilon}_d)) \geq \int_{\mathbb{S}^{d-1}} \mathbb{E}(\psi(r \bm{\gamma} + \bm{\varepsilon}_d)) \mathsf{d}  \rho_{d} (\bm{\gamma}) + \epsilon \right\} \right).
\end{equation*}
It is well-known that the total variation distance between two Gaussian distributions with mean vectors~$\bm{\theta}_1$ and~$\bm{\theta}_2$, respectively, is bounded from above by~$\|\bm{\theta}_1 - \bm{\theta}_2\|_2/2$. This implies that the power function~$\bm{\theta} \mapsto \mathbb{E}(\psi(\bm{\theta} + \bm{\varepsilon}_d))$ is Lipschitz continuous with constant~$1$.  Note that the function~$\bm{\theta} \mapsto \mathbb{E}(\psi(r \bm{\theta} + \bm{\varepsilon}_d))$ is then obviously Lipschitz continuous with constant~$r$. Since the geodesic distance between two points in~$\mathbb{S}^{d-1}$ is not smaller than the Euclidean distance between the two points, the function~$\bm{\theta} \mapsto \mathbb{E}(\psi(r \bm{\theta} + \bm{\varepsilon}_d))$ is Lipschitz continuous with constant~$r$ on~$\mathbb{S}^{d-1}$ when equipped with the geodesic distance. Using the concentration inequality for Lipschitz continuous functions on spheres in Theorem 1.7.9 of \cite{braz}, we obtain 
\begin{equation*}
	\rho_{d, r} \left(  \mathbb{F}_d(\epsilon, \psi) \right) \leq 2 \exp\left( - \epsilon^2 (d-1)/ (2r^2) \right).
\end{equation*}

\end{document}